\numberwithin{equation}{section}
\newtheorem{theorem}{Theorem}[section]
\newtheorem{lemma}[theorem]{Lemma}
\newtheorem{corollary}[theorem]{Corollary}
\theoremstyle{remark}
\newtheorem{example}[theorem]{Example}
\DeclareMathOperator{\codim}{codim}
\def\Ddots{\mathinner{\mkern1mu\raise\p@
\vbox{\kern7\p@\hbox{.}}\mkern2mu
\raise4\p@\hbox{.}\mkern2mu\raise7\p@\hbox{.}\mkern1mu}}
\newcounter{FNC}[page]
\def\fauxfootnote#1{{\addtocounter{FNC}{2}\Magenta{$^\fnsymbol{FNC}$}%
     \let\thefootnote\relax\footnotetext{\Magenta{$^\fnsymbol{FNC}$#1}}}}
\DeclareMathOperator{\Gr}{Gr}
\DeclareMathOperator{\rank}{rank}
\DeclareMathOperator{\Span}{Span}
\newcommand{\be}{{\bf e}}
\newcommand{\bff}{{\bf f}}
\newcommand{\bbeta}{{\bm \beta}}
\newcommand{\C}{{\mathbb C}}
\newcommand{\calO}{{\mathcal O}}
\newcommand{\Fdot}{F_\bullet}
\newcommand{\Edot}{E_\bullet}
\newcommand{\I}{\raisebox{-1pt}{\includegraphics{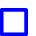}}}
\newcommand{\sI}{\includegraphics{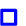}}
\newcommand{\III}{\raisebox{-1pt}{\includegraphics{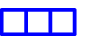}}}
\newcommand{\sIII}{\includegraphics{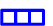}}
\newcommand{\defcolor}[1]{\Blue{#1}}
\newcommand{\demph}[1]{\defcolor{{\sl #1}}}
\title[Certifiable Numerical Computations in Schubert Calculus]{Certifiable Numerical
  Computations\\ in Schubert Calculus} 
\author{Jonathan D. Hauenstein, Nickolas Hein, Frank Sottile}
\address{Jonathan D. Hauenstein \\
         Department of Mathematics\\
         North Carolina State University\\
         Raleigh\\
         North Carolina \ 27695\\
         USA}
\email{hauenstein@ncsu.edu}
\urladdr{\url{http://www.math.ncsu.edu/~jdhauens/}}
\address{Nickolas Hein \\
         Department of Mathematics\\
         Texas A\&M University\\
         College Station\\
         Texas \ 77843\\
         USA}
\email{nhein@math.tamu.edu}
\urladdr{\url{http://www.math.tamu.edu/~nhein}}
\address{Frank Sottile \\
         Department of Mathematics\\
         Texas A\&M University\\
         College Station\\
         Texas \ 77843\\
         USA}
\email{sottile@math.tamu.edu}
\urladdr{\url{http://www.math.tamu.edu/~sottile/}}
\thanks{Research of Sottile and Hein supported in part by NSF grant DMS-0915211.
        Research of Hauenstein supported in part by AFOSR grant FA8650-13-1-7317 and NSF grant DMS-1262428.}
\subjclass[2010]{14N15, 14Q20.}
\keywords{Computational Schubert calculus, complete intersection.}
\begin{document}
\begin{abstract}
 Traditional formulations of geometric problems from the Schubert calculus, either in
 Pl\"ucker coordinates or in local coordinates provided by Schubert cells, yield systems
 of polynomials that are typically far from complete intersections and (in local
 coordinates) typically of degree exceeding two.
 We present an alternative primal-dual formulation using parametrizations of Schubert
 cells in the dual Grassmannians in which intersections of Schubert varieties become
 complete intersections of bilinear equations.
 This formulation enables the numerical certification of problems in the Schubert
 calculus.
\end{abstract}
\maketitle
%
\section*{Introduction}
Numerical nonlinear algebra provides algorithms that certify numerically computed
solutions to a system of polynomial equations, provided that the system is square---the
number of equations is equal to the number of variables.
To use these algorithms for certifying results obtained through numerical computation in
algebraic geometry requires that we use equations which exhibit our varieties as complete
intersections.
While varieties are rarely global complete intersections, it suffices to have a
{\sl local} formulation in the folowing sense:
The variety has an open dense set which our equations exhibit as a complete intersection
in some affine space.
If it is zero-dimensional, then we require the variety to be a complete intersection in some
affine space.
Here, we use a primal-dual formulation of Schubert varieties to formulate all
problems in Schubert calculus on a Grassmannian as complete intersections, and indicate
how this extends to all classical flag manifolds.

The Schubert calculus of enumerative geometry has come to mean all problems which involve
determining the linear subspaces of a vector space that have specified positions with
respect to other fixed, but general, linear subspaces.
It originated in work of Schubert~\cite{Sch1879} and others to solve geometric problems
and was systemized in the 1880's~\cite{Sch1886c,Sch1886a,Sch1886b}.
Most work has been concerned with understanding the numbers of solutions to problems in
the Schubert calculus, particularly finding~\cite{LR1934},
proving~\cite{Schutzenberger_jeu_de_taquin,Thomas}, and generalizing the
Littlewood-Richardson rule.
As a rich and well-understood class of geometric problems, the Schubert calculus is a
laboratory for the systematic study of new phenomena in enumerative
geometry~\cite{So_FRSC}.
This study requires that Schubert problems be modeled and solved on a computer.

Symbolic methods, based on Gr\"obner bases and elimination theory, are well-understood and
quite general.
They are readily applied to solving Schubert problems---their use was
central to uncovering evidence for the Shapiro Conjecture~\cite{So00b} as well as
formulating its generalizations~\cite{secant,monotone_MEGA,RSSS}.
An advantage of symbolic methods is that they are exact---a successful computation is a
proof that the outcome is as claimed.
This exactness is also a limitation, particularly for Gr\"obner bases.
The output of a Gr\"obner basis computation contains essentially all the information of the
object computed, and this is one reason for the abysmal complexity of Gr\"obner
bases~\cite{MM82}, including that of zero-dimensional ideals~\cite{HL10}.

Besides fundamental complexity, another limitation on Gr\"obner bases is that they do not
appear to be parallelizable.
This matters since the predictions of Moore's Law are now fulfilled through increased
processor parallelism, and not by increased processor speed.
Numerical methods based upon homotopy continuation~\cite{SW05} offer an attractive
parallelizable alternative.
A drawback to numerical methods is that they
do not intrinsically come with a proof that their output is as claimed, and for the
Schubert calculus, standard homotopies perform poorly since the  
upper bounds on the number of solutions on which they are based (total degree or mixed
volume), drastically overestimate the true number of solutions.  
The Pieri homotopy~\cite{HSS98} and
Littlewood-Richardson homotopy~\cite{svv} are optimal homotopy methods
which are limited to Schubert calculus on the Grassmannian.
A numerical approximation to a solution of a system of polynomial equations may be refined
using Newton's method and we call each such refinement a Newton iteration.
Smale analyzed the convergence of repeated Newton iterations, when the system is
square~\cite{S86}.
The name, $\alpha$-theory, for this study refers to a constant $\alpha$ which depends upon
the approximate solution $x_0$ and system $f$ of polynomials~\cite[Ch.~8]{BCSS}.
Smale showed that there exists $\alpha_0 > 0$ such that if $\alpha < \alpha_0$,
then Newton iterations starting at $x_0$ will converge quadratically to a solution $x$ of the system $f$.
That is, the number of significant digits doubles with each Newton iteration.
With $\alpha$-theory, we may use numerical methods in place of symbolic methods in many
applications, e.g., counting the number of real solutions~\cite{HS12}, while retaining the certainty of symbolic methods.
While there has been some work studying the convergence of Newton iterations when the
system is overdetermined~\cite{DS00} (more equations than variables), certification for
solutions is only known to be possible for square systems.

Using a determinantal formulation, Schubert problems are prototypical 
overdetermined polynomial systems.  Our main result is 
Theorem~\ref{thm:main} which states there exists 
a natural reformulation of these systems as complete
intersections using bilinear equations, thereby enabling the certification of approximate solutions.

In the next section, we give the usual determinantal formulation of intersections of Schubert
varieties in local coordinates for the Grassmannian.
In Section ~\ref{dual}, we reformulate Schubert problems as complete intersections by solving
a dual problem in a larger space, exchanging high-degree determinantal equations for bilinear
equations.
Finally, in Section ~\ref{future}, we sketch a hybrid aproach and 
discuss generalizations of our formulation.  

\section{Schubert Calculus}\label{SCalc}

The solutions to a problem in the Schubert calculus are the points of an intersection of
Schubert varieties in a Grassmannian.
These intersections are formulated as systems of polynomial equations in 
local coordinates for the Grassmannian, which we now present.

Fix positive integers $k<n$ and let \defcolor{$V$} be a 
complex vector space of dimension $n$.
The set of all $k$-dimensional linear subspaces of $V$, denoted \defcolor{$\Gr(k,V)$}, is the
\demph{Grassmannian of $k$-planes in $V$}.

Let \defcolor{$\binom{[n]}{k}$} denote the set of sublists of $[n]:=(1,2,\dotsc,n)$ of
cardinality $k$. 
A \demph{Schubert (sub)variety} $X_\beta\Fdot\subset\Gr(k,V)$ is given by the data of a \demph{Schubert condition}
$\defcolor{\beta}\in\binom{[n]}{k}$ and a \demph{(complete) flag}
$\defcolor{\Fdot}:F_1\subset F_2\subset \dotsb \subset F_n =V$ of linear subspaces with
$\dim F_i=i$ where
 \begin{equation}\label{Eq:SchubertVariety}
   \defcolor{X_\beta\Fdot}\ :=\ 
   \{H\in \Gr(k,V)\ \mid\ 
     \dim(H\cap F_{\beta_i})\geq i,\mbox{ for  }i=1,\dotsc,k\}\,.
 \end{equation}
That is, the Schubert variety $X_\beta\Fdot$ 
is the set of $k$-planes satisfying the Schubert condition $\beta$
with respect to the flag $\Fdot$.

There are two standard formulations for a Schubert variety $X_\beta\Fdot$,
one as an implicit subset of $\Gr(k,V)$ given by a system of equations, and the
other explicity, as a parametrized subset of $\Gr(k,V)$.
They both begin with local coordinates for the Grassmannian.
An ordered basis $\be_1,\dotsc,\be_n$ for $V$ yields an identification of $V$ with $\C^n$
and leads to a system of local coordinates for $\Gr(k,V)$ given by matrices
$X\in\C^{k\times(n-k)}$. 
For this, the $k$-plane associated to a matrix $X$ is the row space of the matrix
$[X:I_k]$ where $I_k$ is the $k\times k$ identity matrix.
If $X=(x_{i,j})_{i=1,\dotsc,k}^{j=1,\dotsc,n-k}$, then this row space
is the span of the vectors $h_i:=\sum_{j=1}^{n-k}\be_{j}x_{i,j} + \be_{n-k+i}$ for
$i=1,\dotsc,k$.

Observe that a flag $\Fdot$ may be given by an ordered basis
\defcolor{$\bff_1,\dotsc,\bff_n$} for $V$, where $F_\ell$ is the linear span of
$\bff_1,\dotsc,\bff_\ell$.
Writing this basis $\{\bff_i\}$ in terms of the basis $\{\be_j\}$ gives a matrix
which we also write as $\Fdot$.  The space $F_\ell$ is the linear span of first $\ell$
rows of the matrix $\Fdot$.  The submatrix of $\Fdot$ consisting of the first $\ell$
rows will also be written as $F_\ell$. 

In the local coordinates $[X:I_k]$ for $\Gr(k,V)$, the Schubert variety $X_\beta\Fdot$ is
defined by 
 \begin{equation}\label{Eq:Rank}
   \rank \left[\begin{matrix}X\,:\,I_k\\ F_{\beta_i}\end{matrix}\right]
    \ \leq\ \beta_i+k-i
    \mbox{ \ for }i=1,\dotsc,k\,.
 \end{equation}
These rank conditions are equivalent to the vanishing of determinantal equations since the
condition $\rank(M)\leq a{-}1$ is equivalent to the vanishing of all $a\times a$ minors
(determinants of $a\times a$ submatrices) of $M$.
These determinants are polynomials in the entries of $X$ of degree up to
$\min\{k,n{-}k\}$, and there are
 \[
   \sum_{i=1}^k \tbinom{n}{\beta_i+k-i+1}\tbinom{k+\beta_i}{\beta_i+k-i+1}
 \]
of them.
If $\beta=(n{-}k\,,n{-}k{+}2\,,n{-}k{+}3\,,\dotsc,n)$, we write
%
%
$\beta=\I$ and since the determinant is the only minor required to vanish, $X_{\sI}\Fdot$ is
a hypersurface in $\Gr(k,V)$.
In all other cases, while there are linear dependencies among the minors, any maximal
linearly independent subset $S$ of minors remains overdetermined,  
i.e., $\#(S)>\codim_{\Gr(k,V)}X_\beta \Fdot$.

For the second formulation, consider the coordinate flag $\Edot$ associated to the
ordered basis $\be_1,\dotsc,\be_n$, so that $E_\ell$ is spanned by $\be_1,\dotsc,\be_\ell$.
The Schubert variety $X_\beta\Edot$ has a system of local coordinates similar to those
for $\Gr(k,V)$.
Consider the set of $k\times n$ matrices $\defcolor{M_\beta}=(m_{i,j})$ whose entries
satisfy
\[
   m_{i,\beta_j}\ =\ \delta_{i,j}\,,\qquad
   m_{i,j}\ =\ 0\mbox{ if }j>\beta_i\,,
\]
and the remaining entries are unconstrained.
These unconstrained entries identify $M_\beta$ with $\C^{\sum_i (\beta_i-i)}$.
The association of a matrix in $M_\beta$ to its row space yields
a parametrization of an open subset of the Schubert 
variety $X_\beta\Edot$ that defines local coordinates.

\begin{example}
When $k=3$ and $n=7$ and $\beta=(2,5,7)$ we have
\[
  M_{257}\  =\ 
   \left[\begin{matrix}
    m_{11} & 1 & 0      & 0      & 0 & 0      & 0 \\
    m_{21} & 0 & m_{23} & m_{24} & 1 & 0      & 0 \\
    m_{31} & 0 & m_{33} & m_{34} & 0 & m_{36} & 1 \\
   \end{matrix}\right]\,.
\]
\end{example}

\begin{lemma}\label{oneCondition}
 The association of a matrix in $M_\beta$ to its row space identifies $M_\beta$ with a
 dense open subset of $X_\beta\Edot$.
 If $\Fdot$ is a complete flag given by a $n\times n$ matrix $\Fdot$, then the association
 of a matrix $H$ in $M_\beta$ to the row space of the product $H\Fdot$ identifies
 $M_\beta$ with a dense open subset of  $X_\beta\Fdot$.
\end{lemma}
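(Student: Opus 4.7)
The plan is to first establish the statement for the coordinate flag $\Edot$ using the pivot structure of matrices in $M_\beta$, and then to derive the general case by transferring via the invertible change-of-basis matrix $\Fdot$.

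For the $\Edot$ case I would verify four points. First, for any $H\in M_\beta$ the rows of $H$ are linearly independent, so the row space of $H$ is a well-defined point of $\Gr(k,V)$; this is immediate from the fact that the $\beta_j$-th column of $H$ is the $j$-th standard basis vector of $\C^k$, which serves as a pivot. Second, the condition $m_{i,j}=0$ for $j>\beta_i$ places the first $i$ rows of $H$ inside $E_{\beta_i}$, and since these $i$ rows are linearly independent, the intersection dimension inequality of \eqref{Eq:SchubertVariety} is satisfied for each $i$, so the row space of $H$ lies in $X_\beta\Edot$. Third, injectivity of the parametrization holds because the pivot positions uniquely normalize a spanning set: from the row space one recovers the entries of $H$ as the coordinates of the unique basis whose $\beta_j$-th column is the $j$-th standard basis vector. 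Fourth, the affine space $M_\beta$ and the Schubert variety $X_\beta\Edot$ are irreducible of the same dimension $\sum_{i=1}^{k}(\beta_i-i)$, so the image of the injective morphism is dense.

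For the general flag $\Fdot$ I would use that the $n\times n$ matrix $\Fdot$ is invertible and, more importantly, that the $i$-th row of the product $H\Fdot$ expanded in the basis $\{\be_j\}$ equals the $i$-th row of $H$ expanded in the basis $\{\bff_j\}$. Hence, if the first $i$ rows of $H$ lie in $E_{\beta_i} = \mathrm{span}(\be_1,\dotsc,\be_{\beta_i})$, then the first $i$ rows of $H\Fdot$ lie in $F_{\beta_i} = \mathrm{span}(\bff_1,\dotsc,\bff_{\beta_i})$. Repeating the four steps above with $\bff_j$ in place of $\be_j$ then yields an identification of $M_\beta$ with a dense open subset of $X_\beta\Fdot$; injectivity is inherited since right multiplication by $\Fdot$ is a bijection on row spaces.

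The step I expect to be most delicate is the denseness assertion. Equal dimension is not by itself enough; I would need to argue that the image is actually open, not merely of full dimension inside $X_\beta\Edot$. The cleanest way to handle this is to identify the image of $M_\beta$ with the Schubert cell $X_\beta^\circ \Edot$, namely the locus where each intersection dimension $\dim(H\cap E_{\beta_i})$ is exactly $i$ and drops to $i-1$ at $E_{\beta_i-1}$. A standard reduced-row-echelon-form argument shows every element of this cell has a unique representative in $M_\beta$, and the cell is classically known to be a dense open subvariety of $X_\beta\Edot$. Everything else in the proof is combinatorial bookkeeping about the pivot positions.
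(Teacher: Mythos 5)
Your proposal is correct and follows essentially the same route as the paper: the paper simply cites the classical fact that $M_\beta$ gives local coordinates on the Schubert cell of $X_\beta\Edot$ (Fulton, p.~147), which is exactly the pivot/echelon-form argument you spell out, and then transfers to a general flag via right multiplication by the invertible matrix $\Fdot$, just as you do. Your identification of the image with the Schubert cell correctly handles the openness/denseness point, so no gap remains.
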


\begin{proof}
 The first statement is the assertion that $M_\beta$ gives local coordinates for
 $X_\beta\Edot$, which is classical~\cite[p.~147]{Fulton}.
 The second statement follows from the observation that if $g\in GL(n,\C)$ is an invertible
 linear transformation, a $k$-plane $H$ lies in $X_\beta\Edot$ if and only if
 $Hg$  lies in $(X_\beta\Edot)g=X_\beta(\Edot g)$.
 The lemma follows as the transformation $g$ with $\Fdot=\Edot g$ is given by
 the matrix $\Fdot$. 
\end{proof}

Counting parameters gives a formula for the codimension of $X_\beta\Fdot$ in $\Gr(k,V)$ namely
\[
   \defcolor{|\beta|}\ :=\ \codim X_\beta \Fdot\ =\ k(n - k) - \sum_i (\beta_i-i)\,.
\]

For $\beta,\gamma\in\tbinom{[n]}{k}$, there is a smaller system of local coordinates
$M_\beta^\gamma$ explicitly parametrizing an intersection of two Schubert varieties. 
Let $\Edot'$ be the coordinate flag opposite to $\Edot$ in which
$E_\ell':=\langle\be_n,\dotsc,\be_{n+1-\ell}\rangle$.
As the flags $\Edot, \Edot'$ are in linear general position, 
the definition of a Schubert variety~\eqref{Eq:SchubertVariety} implies that 
the intersection $X_\beta\Edot \cap X_\gamma\Edot'$ is
nonempty if and only if we have $n+1-\gamma_{k+1-i}\leq \beta_i$ for each
$i=1,\dotsc,k$.
When this holds, the intersection has a system of local coordinates
given by the row space of $k\times n$ matrices $\defcolor{M_\beta^\gamma}=(m_{i,j})$
in which
\[
    m_{i,j}\ :=\ 0 \ \mbox{ if }\ j\not\in[n+1-\gamma_{k+1-i},\beta_i]
    \ \mbox{ and }\ m_{i,\beta_i}\ :=\ 1\,,\ \mbox{ for }\ i=1,\dotsc,k\,.
\]
The unconstrained entries of $M_\beta^\gamma$ identify it with the affine space
$\C^{k(n-k)-|\beta|-|\gamma|}$.

\begin{example}
When $k=3$, $n=7$, $\beta=(2,5,7)$, and $\gamma=(3,5,7)$ we have
\[
  M_{257}^{357}\  =\ 
   \left[\begin{matrix}
    m_{11} & 1 & 0      & 0      & 0      & 0      & 0 \\
    0      & 0 & m_{23} & m_{24} & 1      & 0      & 0 \\
    0      & 0 & 0      & 0      & m_{35} & m_{36} & 1 \\
   \end{matrix}\right]\,.
\]
\end{example}

\begin{lemma}\label{twoConditions}
 The association of a matrix in $M_\beta^\gamma$ to its row space identifies
 $M_\beta^\gamma$ with a dense open subset of $X_\beta\Edot\cap X_\gamma\Edot'$.
 Suppose that $\Fdot$ and $\Fdot'$ are flags in general position in $V$ and that $g$ is an
 invertible linear transformation such that $\Fdot=\Edot g$ and $\Fdot'=\Edot'g$.
 Then the set of matrices $M_\beta^\gamma g$ parametrizes a dense open subset of 
 $X_\beta\Fdot\cap X_\gamma\Fdot'$.
\end{lemma}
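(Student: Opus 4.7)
The plan is to prove the two statements in parallel with Lemma~\ref{oneCondition}: first establish the coordinate-flag case, then transport to general flags via the $GL(n,\C)$-action.  For the transport, the key is that any pair of complete flags in general position is $GL(V)$-equivalent to $(\Edot, \Edot')$, since both pairs realize the same relative position $\dim(F_i \cap F'_j) = \max(0, i+j-n)$.  Thus there exists $g \in GL(n,\C)$ with $\Fdot = \Edot g$ and $\Fdot' = \Edot' g$, and the coordinate case transports to $M_\beta^\gamma g$ parametrizing a dense open subset of $(X_\beta\Edot \cap X_\gamma\Edot')g = X_\beta\Fdot \cap X_\gamma\Fdot'$, using the $GL$-equivariance recorded in the proof of Lemma~\ref{oneCondition}.

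The work is therefore in the coordinate-flag case, which I would carry out in three steps.  First, check that for any $M \in M_\beta^\gamma$ the row space $H$ lies in both Schubert varieties:  the zero pattern $m_{i,j}=0$ for $j > \beta_i$ places rows $1,\ldots,i$ in $E_{\beta_i}$, yielding $\dim(H \cap E_{\beta_i}) \geq i$; and $m_{i,j}=0$ for $j < n{+}1{-}\gamma_{k+1-i}$ places rows $k{+}1{-}j,\ldots,k$ in $E'_{\gamma_j}$, yielding $\dim(H \cap E'_{\gamma_j}) \geq j$.  Second, count parameters: the free entries identify $M_\beta^\gamma$ with $\C^{k(n-k)-|\beta|-|\gamma|}$, matching the expected dimension of $X_\beta\Edot \cap X_\gamma\Edot'$.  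Third, establish generic injectivity: on the open locus in the intersection where both Schubert containments are strict equalities, the subspace $H \cap E_{\beta_i} \cap E'_{\gamma_{k+1-i}}$ is one-dimensional, so row $i$ of the preimage is recovered as its unique spanning vector normalized by $m_{i,\beta_i}=1$.

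Combining these, the row-span map is an algebraic morphism between irreducible varieties of the same dimension that is injective on a dense open subset of $M_\beta^\gamma$; hence its image contains a dense open subset of $X_\beta\Edot \cap X_\gamma\Edot'$, giving the desired identification.  The main obstacle I anticipate is justifying the irreducibility and expected codimension $|\beta|+|\gamma|$ of this intersection.  The cleanest route is to invoke Richardson's classical theorem that $X_\beta\Edot \cap X_\gamma\Edot'$ is a Richardson variety, irreducible of codimension $|\beta|+|\gamma|$ whenever the combinatorial nonemptiness condition $n{+}1{-}\gamma_{k+1-i} \leq \beta_i$ holds; alternatively one could use Kleiman's transversality theorem applied to the $GL(V)$-orbit of $X_\gamma\Edot'$.
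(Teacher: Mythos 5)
Your proposal is correct in substance, but it is worth knowing that the paper does not actually prove this lemma: it simply declares it classical (as with Lemma~\ref{oneCondition}, citing Fulton for the coordinate-flag case) and dismisses the existence of $g$ as a linear-algebra exercise. Your argument fills this in along the standard lines: the transport step via the $GL(V)$-action on pairs of flags in general position matches exactly what the paper intends, and your three-step treatment of the coordinate case (containment from the support pattern of the rows, the parameter count $\sum_i(\beta_i+\gamma_{k+1-i}-n-1)=k(n-k)-|\beta|-|\gamma|$, and recovery of row $i$ as the normalized spanning vector of $H\cap E_{\beta_i}\cap E'_{\gamma_{k+1-i}}$) is sound, with Richardson's irreducibility and codimension statement correctly identified as the external input that makes the dominance argument close. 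One remark in your favor: your restriction to \emph{generic} injectivity is not merely cautious but necessary. The row-span map on $M_\beta^\gamma$ is genuinely not injective everywhere --- e.g.\ in $M_{257}^{357}$, if $m_{23}=m_{24}=0$ then adding any multiple of row $2$ to row $3$ preserves both the shape and the row space --- so the lemma's phrase ``identifies $M_\beta^\gamma$ with a dense open subset'' must be read as a birational identification (an isomorphism after restricting to dense open subsets of both sides), which is all that is needed for the paper's application to general instances. The only loose end in your write-up is the final inference: generic injectivity plus equal dimensions gives dominance onto the irreducible Richardson variety, hence the image contains a dense open set by Chevalley's theorem; you should say this explicitly rather than leaning on injectivity alone.
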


As with Lemma~\ref{oneCondition}, this is classical.
The existence of the linear transformation $g$ sending the coordinate flags 
$\Edot,\Edot'$ to the flags $\Fdot$ and $\Fdot'$ is an exercise in linear algebra.
We often assume that two of our flags are the coordinate flags $\Edot,\Edot'$.

A \demph{Schubert problem} on $\Gr(k,V)$ is a list of Schubert conditions
$\defcolor{\bbeta}=(\beta^1,\dotsc,\beta^\ell)$ with
$\sum_{i=1}^\ell |\beta^i|=k(n{-}k)$.
Given a Schubert problem $\bbeta$ and a list $\Fdot^1,\dotsc,\Fdot^\ell$, the
intersection
 \begin{equation}\label{Eq:instance}
    X_{\beta^1}\Fdot^1 \cap \cdots \cap X_{\beta^\ell}\Fdot^\ell
 \end{equation}
is an \demph{instance} of the Schubert problem $\bbeta$.
When the flags are general, the intersection~\eqref{Eq:instance} is
transverse~\cite{KL74}.
The points in the intersection are the \demph{solutions} to this instance of the Schubert
problem, and their number \defcolor{$N(\bbeta)$} may be calculated using algorithms based
on the Littlewood-Richardson rule.

\begin{example}\label{Ex:SP}
  Suppose that $k=2$, $n=6$, and $\bbeta=(\beta,\beta,\beta,\beta)$ where $\beta=(3,6)$.
  Since $|(3,6)|=2$ and $2+2+2+2=2(6-2)=\dim\Gr(2,\C^6)$, $\bbeta$ is a Schubert problem
  on $\Gr(2,\C^6)$.
  One can verify that $N(\bbeta)=3$.
\end{example}

We wish to solve instances~\eqref{Eq:instance} of a Schubert problem $\bbeta$
formulated as a system of equations given by the rank conditions~\eqref{Eq:Rank}.
Rather than use the local coordinates $[X:I_k]$ for the Grassmannian, which has $k(n{-}k)$
variables, we may use $M_{\beta^1}$ as local coordinates for $X_{\beta^1}\Fdot^1$, which gives
$k(n{-}k)-|\beta^1|$ variables.
When $\Fdot^1$ and $\Fdot^2$ are in linear general position we may use
$M_{\beta^1}^{\beta^2}$ as local coordinates for
$X_{\beta^1}\Fdot^1\cap X_{\beta^2}\Fdot^2$, which uses only
$k(n{-}k)-|\beta^1|-|\beta^2|$ variables.
These smaller sets of local coordinates often lead to more efficient computation.

\begin{example}
 For the Schubert problem of Example~\ref{Ex:SP}, if we assume that $\Fdot^1=\Edot$
 and $\Fdot^2=\Edot'$, then we may use the local coordinates $M_{36}^{36}$.
 In these local coordinates, the essential rank conditions~\eqref{Eq:Rank} on the
 Schubert variety $X_{36}\Fdot$ are equivalent to the vanishing of all full-sized
 ($5\times 5$) minors of the $5\times 6$ matrix whose first two rows are $M_{36}^{36}$
 and last three are $F_3$.  In particular, we have $2\cdot 6=12$ equations of 
 degree at most 2 in four variables, which have three common solutions.
 The maximal linearly independent set of equations
 consists of six equations with four variables, which remains overdetermined.
\end{example}

\section{Primal-Dual Formulation of Schubert Problems}\label{dual}

Large computational experiments~\cite{secant,monotone_MEGA,So00b} have used
symbolic computation to solve billions of instances of Schubert problems, producing
compelling conjectures, some of which have since been
proved~\cite{EG02,Mod4,MTV_Annals,MTV_R}.
These experiments required certified symbolic methods in characteristic zero and were
constrained by the limits of computability imposed by the complexity of Gr\"obner basis
computation.
Roughly, Schubert problems with more than 100 solutions or whose formulation involves more
than 16 variables are infeasible, and a typical problem at the limit of feasibility has 30
solutions in 9~variables.

We are not alone in the belief that numerical methods offer the best route for studying
larger Schubert problems. 
This led to the development of specialized numerical algorithms for Schubert
problems, such as the Pieri homotopy algorithm~\cite{HSS98}, which was used to study a
problem with 17589 solutions~\cite{LS}.
It is also driving the development~\cite{svv} and implementation~\cite{LMSVV} of the
Littlewood-Richardson homotopy, based on Vakil's geometric Littlewood-Richardson
rule \cite{Va06a,Va06b}.  Regeneration~\cite{HSW10} offers 
another numerical approach for Schubert problems.

As explained in Section~\ref{SCalc}, traditional formulations of Schubert problems 
typically lead to overdetermined systems of 
polynomials of degree $\min\{k,n{-}k\}$, expressed in whichever of the
systems $[X\,:\,I_k]$, $M_{\beta}$, or $M_\beta^\gamma$ of local coordinates is relevant.
We present an alternative formulation of Schubert varieties and Schubert problems as
complete intersections of bilinear equations involving more variables.

Recall that $V$ is a vector space equipped with a basis
$\be_1,\dotsc,\be_n$.
Let \defcolor{$V^*$} be its dual vector space and
\defcolor{$\be_1^*,\dotsc,\be_n^*$} be the corresponding dual basis.
For every $k=1,\dotsc,n{-}1$, the association of a $k$-plane $H\subset V$ to its
annihilator $H^\perp\subset V^*$ is the cannonical isomorphism, written
$\defcolor{\perp}$, between the Grassmannian $\Gr(k,V)$ and its \demph{dual Grassmannian}
\defcolor{$\Gr(n{-}k,V^*)$}.
For a Schubert variety $X_\beta\Fdot\subset\Gr(k,V)$
we have $\defcolor{{\perp}(X_\beta\Fdot)} := \{H^\perp\mid H\in X_\beta\Fdot\}$,
which is a subset of $\Gr(n{-}k,n)$.
To identify ${\perp}(X_\beta\Fdot)$, we make some definitions.

Each flag $\Fdot$ on $V$ has a corresponding \demph{dual flag} \defcolor{$\Fdot^\perp$} on
$V^*$,
  \[
    \Fdot^\perp\ \colon\ 
    (F_{n-1})^\perp\subset (F_{n-2})^\perp \subset \cdots \subset (F_1)^\perp
     \subset V*\,,
  \]
which is a flag since $\dim(F_{i}) + \dim(F_{n-i})^\perp=n$.
For $\beta\in\binom{[n]}{k}$, a subset of $[n]$ of cardinality $k$, consider
$\defcolor{\beta^\perp}:=(j\ |\ n{+}1{-}j\in [n]\smallsetminus\beta)\in\tbinom{[n]}{n-k}$.
The map $\beta\mapsto\beta^\perp$ is a bijection.

\begin{lemma}\label{dualSchub}
 For a Schubert variety $X_\beta\Fdot\subset\Gr(k,V)$, we have
 ${\perp}(X_\beta\Fdot) = X_{\beta^\perp}\Fdot^\perp$.
\end{lemma}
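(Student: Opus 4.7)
The plan is to translate the defining inequalities on both sides of the claimed equality into conditions on a single numerical ``rank function'' of flag intersections, and then to match these using a duality formula from linear algebra.

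First, I would rewrite the defining conditions of $X_\beta\Fdot$ in a form indexed by arbitrary flag-indices rather than by positions in $\beta$. Setting
\[
 r_\beta(j)\,:=\,\#\{i\mid\beta_i\leq j\}
 \qquad\text{and}\qquad
 d_H(j)\,:=\,\dim(H\cap F_j)\,,
\]
the family of rank inequalities $d_H(\beta_i)\geq i$ for $i=1,\dots,k$ is equivalent to $d_H(j)\geq r_\beta(j)$ for every $j\in[n]$: one implication follows from monotonicity of $d_H$, the other by evaluating at $j=\beta_i$ together with $r_\beta(\beta_i)=i$. The analogous reformulation, with $d_K(j):=\dim(K\cap(\Fdot^\perp)_j)$ for $K:=H^\perp$, handles $X_{\beta^\perp}\Fdot^\perp$.

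Next I would invoke the annihilator identity $(A+B)^\perp=A^\perp\cap B^\perp$ together with $\dim(A+B)=\dim A+\dim B-\dim(A\cap B)$ to derive the key formula
\[
 d_K(j)\,=\,j-k+d_H(n-j)\,.
\]
Substituting $m=n-j$ then transforms the condition ``$d_K(j)\geq r_{\beta^\perp}(j)$ for all $j$'' into ``$d_H(m)\geq r_{\beta^\perp}(n-m)+k-n+m$ for all $m$''.

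What remains is the combinatorial identity
\[
 r_\beta(m)\,=\,r_{\beta^\perp}(n-m)+k-n+m
 \qquad\text{for } m\in\{0,1,\dots,n\}\,,
\]
which, via the substitution $j'=n{+}1{-}j$ in the definition of $\beta^\perp$, reduces to the elementary count $|\beta^\perp\cap[n-m]|=|\{m{+}1,\dots,n\}\setminus\beta|$. The two families of rank inequalities then coincide, yielding the lemma. I expect the main obstacle to be keeping track of indices in this last identity, where the complementation $\beta\mapsto[n]\setminus\beta$ must be correctly interleaved with the reflection $j\mapsto n{+}1{-}j$ that together define $\beta^\perp$; once that is dispatched, the rest of the argument is formal.
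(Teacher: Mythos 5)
Your proposal is correct and follows essentially the same route as the paper: both first replace the conditions indexed by $\beta$ with the equivalent family $\dim(H\cap F_j)\geq\#(\beta\cap[j])$ for all $j$, then apply the annihilator/dimension identity to get $\dim(H^\perp\cap F_{n-j}^\perp)=n-\dim(H+F_j)$, and finish with the same combinatorial identity $\#(\beta^\perp\cap[n-j])=n-j-\#(\beta\cap\{j{+}1,\dots,n\})$. Your $d_K(j)=j-k+d_H(n-j)$ is just a repackaging of the paper's chain of equivalences, so no substantive difference.
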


Note that $X_\beta\Fdot= {\perp}(X_{\beta^\perp}\Fdot^\perp)$.
We call $X_\beta\Fdot$ and $X_{\beta^\perp}\Fdot^\perp$ \demph{dual Schubert varieties}.
\begin{proof}
 Observe that if $\Fdot$ is a flag and $H$ a linear subspace, then $\dim H\cap F_{b}\geq
 a$ implies that $\dim H\cap F_{b+1}\geq a$.
 Thus the definition~\eqref{Eq:SchubertVariety} of Schubert variety is equivalent to
 \begin{equation}\label{Eq:SchubVarAlt}
   \defcolor{X_\beta\Fdot}\ :=\ 
   \{H\in \Gr(k,V)\ \mid\ 
     \dim(H\cap F_{i})\geq \#\{\beta \cap [i]\},\mbox{ for  }i=1,\dotsc,n\}\,.
 \end{equation}
 For every $H\in\Gr(k,V)$ and all $i=1,\dotsc,n$, the following are equivalent:
 \begin{multline*}
   \dim H\cap F_i\ \geq\ \#(\beta \cap [i])\\
  \Leftrightarrow\ 
   \makebox[330pt][l]{$\dim(\Span\{ H,F_i\})\ \leq\ k+i-\#(\beta \cap [i])\ =\ 
    i + \#(\beta \cap \{i+1,\dotsc,n\})$}\\
  \Leftrightarrow\ 
    \makebox[330pt][l]{$\dim(\Span\{ H,F_i\}^\perp)\ \geq
      n - i - \#(\beta \cap \{i+1,\dotsc,n\})$} \\
  \Leftrightarrow\ 
    \makebox[372pt][l]{$\dim(H^\perp \cap F^\perp_{n-i})\ \geq
      n - i - \#(\beta \cap \{i+1,\dotsc,n\})\,.$}
 \end{multline*}
 Since $n-i-\#(\beta \cap \{i{+}1,\dotsc,n\})=\#(\beta^\perp\cap[n{-}i])$, the
 lemma follows from \eqref{Eq:SchubertVariety}.
\end{proof}

Let $\Delta\colon\Gr(k,V)\rightarrow\Gr(k,V)\times\Gr(n{-}k,V^*)$ be the 
graph of the canonical isomorphism $\perp\colon\Gr(v,V)\to\Gr(n{-}k,V^*)$.
We call $\Delta$ the \demph{dual diagonal} map.
In this context, the classical reduction to the diagonal becomes the following.

\begin{lemma}\label{L:diagonal}
 Let $A,B\subset\Gr(k,V)$.  
 Then $\Delta(A\cap B)= (A\times{\perp}(B))\cap\Delta(\Gr(k,V))$.
\end{lemma}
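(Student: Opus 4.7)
The plan is to prove both inclusions directly from the definitions, using the fact that $\perp$ is a bijection between $\Gr(k,V)$ and $\Gr(n{-}k,V^*)$.

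First I would unwind the notation: by definition $\Delta(H)=(H,H^\perp)$ for every $H\in\Gr(k,V)$, the image $\Delta(\Gr(k,V))$ consists of all pairs of the form $(H,H^\perp)$, and ${\perp}(B)=\{K^\perp\mid K\in B\}$. With these identifications in hand, the claim becomes a routine set-theoretic verification.

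For the forward inclusion, I would take $H\in A\cap B$ and observe that $\Delta(H)=(H,H^\perp)$ lies in $\Delta(\Gr(k,V))$ tautologically, lies in $A\times\Gr(n{-}k,V^*)$ because $H\in A$, and satisfies $H^\perp\in{\perp}(B)$ because $H\in B$. For the reverse inclusion, I would pick a pair in $(A\times{\perp}(B))\cap\Delta(\Gr(k,V))$; being in the image of $\Delta$ forces it to have the form $(H,H^\perp)$, and the other conditions give $H\in A$ and $H^\perp\in{\perp}(B)$. The key observation is then that $\perp$ is a bijection, so $H^\perp\in{\perp}(B)$ forces $H\in B$, whence $H\in A\cap B$ and the pair equals $\Delta(H)\in\Delta(A\cap B)$.

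There is no real obstacle here; the only point that needs to be handled carefully is the injectivity of $\perp$ in the reverse inclusion, since without it one could only conclude $H^\perp=K^\perp$ for some $K\in B$ rather than $H\in B$ itself. This is immediate from the preceding discussion of $\perp$ as the canonical isomorphism between $\Gr(k,V)$ and $\Gr(n{-}k,V^*)$.
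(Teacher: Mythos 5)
Your argument is correct and complete; the paper itself offers no proof of this lemma, treating it as the classical reduction to the diagonal, and your two-inclusion verification (including the correctly identified need for injectivity of $\perp$ in the reverse direction) is exactly the routine check being omitted.
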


We call the reduction to the diagonal of Lemma~\ref{L:diagonal} the
\demph{primal-dual} reformulation of the intersection $A\cap B$.
We use this primal-dual reformulation to express Schubert problems as
complete intersections given by bilinear equations.
For this, suppose that $M$ is a $k\times n$ matrix whose row space $H$ is a $k$-plane in
$V$ and $N$ is a $n\times(n{-}k)$ matrix whose column space $K$ is a $(n{-}k)$-plane in
$V^*$.
(The coordinates of the matrices---columns for $M$ and rows for $N$---are with respect to 
the bases $\be_i$ and $\be^*_j$.)
Then $H^\perp=K$ if and only if $MN=0_{k\times(n{-}k)}$, giving $k(n{-}k)$ bilinear
equations in the entries of $M$ and $N$ for $\Delta(\Gr(k,V))$.
We deduce the fundamental lemma underlying our reformulation.

\begin{lemma}\label{L:bilinear}
 Let $A,B$ be two subsets of $\Gr(k,V)$ and suppose that $M$ is a set of 
 $k\times n$ matrices parametrizing $A$ (via row space) and that $N$ is a set of   
 $n\times(n{-}k)$ matrices parametrizing ${\perp}(B)$ (via column space).
 Then $\Delta(A\cap B)$ is the subset of $A\times{\perp}(B)$ defined in its parametrization
 $M\times N$ by the equations $MN=0_{k\times(n{-}k)}$.
\end{lemma}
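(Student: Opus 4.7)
The plan is to combine Lemma~\ref{L:diagonal} with the explicit bilinear description of the dual diagonal $\Delta(\Gr(k,V))$ recorded in the paragraph immediately before the statement. Lemma~\ref{L:diagonal} supplies
\[
  \Delta(A\cap B)\ =\ \bigl(A\times {\perp}(B)\bigr)\cap \Delta(\Gr(k,V))\,,
\]
so it suffices to describe the intersection with $\Delta(\Gr(k,V))$ inside the parametrization $M\times N$ of $A\times{\perp}(B)$.

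For this I would unwind the definition of the dual diagonal as the graph of the canonical isomorphism $\perp$: a pair $(H,K)\in\Gr(k,V)\times\Gr(n{-}k,V^*)$ lies in $\Delta(\Gr(k,V))$ if and only if $K=H^\perp$. With $H$ the row space of $M$ and $K$ the column space of $N$, the containment $K\subseteq H^\perp$ is equivalent to the vanishing of the natural pairing between each row of $M$ and each column of $N$, which is precisely the matrix identity $MN=0_{k\times(n-k)}$. Because $\dim K=n{-}k=\dim H^\perp$, this inclusion is automatically an equality, giving the pointwise equivalence $(H,K)\in\Delta(\Gr(k,V))\Leftrightarrow MN=0$.

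Combining the two steps, the subset of $M\times N$ defined by $MN=0$ corresponds under the parametrizations by row and column space exactly to $\Delta(A\cap B)$, which is the content of the lemma. The argument is essentially formal: Lemma~\ref{L:diagonal} provides the reduction to the diagonal, while the annihilator-to-matrix translation has already been made explicit in the preceding paragraph. I anticipate no serious obstacle; the lemma is a clean repackaging of these two ingredients in parametric form, and its value lies less in the proof than in the observation that the defining equations are bilinear, hence the ambient system becomes a complete intersection of the right codimension.
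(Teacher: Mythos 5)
Your proposal is correct and follows exactly the route the paper takes: reduction to the diagonal via Lemma~\ref{L:diagonal} combined with the observation, stated in the paragraph immediately preceding the lemma, that $H^\perp=K$ if and only if $MN=0_{k\times(n-k)}$. Your dimension-count remark (that $MN=0$ gives $K\subseteq H^\perp$ and equality follows since $\dim K=n-k$) is a small detail the paper leaves implicit, but otherwise the arguments coincide.
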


\begin{example}\label{Ex:bilinear}
  We explore some consequences of Lemma~\ref{L:bilinear}.
  Suppose that $A\subset\Gr(k,V)$ is the subset parametrized by matrices $[X:I_k]$ for $X$
  a $k\times(n{-}k)$ matrix.
  Then ${\perp}(A)\subset\Gr(n{-}k,V^*)$ is parametrized by
  matrices $[I_{n-k}:Y^T]^T$, where $Y$ is a $k\times(n{-}k)$ matrix.
  The bilinear equations defining $\Delta(A)\subset A\times{\perp}(A)$ coming from
  these parametrizations are $X+Y=0$.
  Thus if $H$ is the row space of $[I_k:X]$, then $H^\perp$ is the column space of
  $[I_{n-k}:-X^T]^T$.

  Let $\beta\in\binom{[n]}{k}$ and $\Fdot$ be a flag in $V$, and 
  suppose that $N_\beta\simeq\C^{k(n-k)-|\beta|}$ is a set of $n\times(n-k)$ matrices
  parametrizing $X_{\beta^\perp}\Fdot^\perp$ as in Lemma~\ref{oneCondition}.
  Let $X^\circ_\beta\Fdot$ be the open subset of $X_\beta\Fdot$ such that
  ${\perp}(X^\circ_\beta\Fdot)$ is the subset parametrized by $N_\beta$.
  Given a set $M$ of $k\times n$ matrices which parametrize an open subset $\calO$ of
  $\Gr(k,V)$, Lemma~\ref{L:bilinear} implies that the bilinear equations $MN_\beta=0$
  in $M\times N_\beta$ define $\Delta(\calO\cap X^\circ_\beta\Fdot)$ as a subset of 
  $\calO\times{\perp}(X^\circ_\beta\Fdot)$.

  When $\calO\cap X_\beta\Fdot\neq\emptyset$, we call this pair of parametrizations $M$
  for $\Gr(k,V)$ and $N_\beta$ for $X_{\beta^\perp}\Fdot^\perp$, together with the biliear
  equations $MN_\beta=0$, the \demph{primal-dual formulation} of the Schubert variety 
  $X_\beta\Fdot$.   
  It is $k(n-k)$ equations in $k(n{-}k)+k(n{-}k)-|\beta|$ variables (at least when $M$ is
  identified with affine space of dimension $k(n{-}k)$.)
  Thus we have identified $\Delta(X_\beta\Fdot)$ as a complete intersection in a system of
  local coordinates. 
\end{example}

We extend this primal-dual formulation of a Schubert variety to a
formulation of a Schubert problem as a complete intersection of bilinear equations.
This uses a dual diagonal map $\Delta$ to the small diagonal in a larger product of
Grassmannians. 
Define
 \[
   \Delta^\ell\ \colon\ 
   \Gr(k,V)\rightarrow\Gr(k,V)\times \bigl(\Gr(n-k,V^*)\bigr)^{\ell-1}\,,
 \]
by sending $H\mapsto(H,H^\perp,\dotsc,H^\perp)$.
Classical reduction to the diagonal extends to multiple factors, giving the following.

\begin{lemma}\label{L:ellDiagonal}
 Let $A_1,\dotsc,A_{\ell}\subset\Gr(k,V)$.
 Then 
\[
   \Delta^\ell(A_1\cap \cdots \cap A_\ell)\ =\ 
   (A_1\times {\perp}(A_2)\times \cdots \times {\perp}(A_\ell))
    \bigcap \Delta^\ell(\Gr(k,V))\,.
\]
\end{lemma}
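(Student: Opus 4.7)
The plan is a direct element-chase, leveraging only the fact that $\perp\colon\Gr(k,V)\to\Gr(n{-}k,V^*)$ is a bijection and the definition of $\Delta^\ell$. An element $(H,K_2,\dotsc,K_\ell)$ of the ambient product lies in $\Delta^\ell(\Gr(k,V))$ if and only if $K_i=H^\perp$ for every $i=2,\dotsc,\ell$; this characterization of the dual diagonal is what makes the argument essentially formal. I would prove the two containments separately, though neither direction should require any substantial new idea.

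For the forward containment, I would take an arbitrary point of the left-hand side, necessarily of the form $\Delta^\ell(H)=(H,H^\perp,\dotsc,H^\perp)$ with $H\in A_1\cap\dotsb\cap A_\ell$. This point lies in $\Delta^\ell(\Gr(k,V))$ by construction, and $H\in A_1$ places it in the first factor while $H\in A_i$ places $H^\perp$ in ${\perp}(A_i)$ for each $i\geq 2$, giving membership in $A_1\times{\perp}(A_2)\times\dotsb\times{\perp}(A_\ell)$.

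For the reverse containment, I would take a point $(H,K_2,\dotsc,K_\ell)$ in the right-hand intersection. Membership in $\Delta^\ell(\Gr(k,V))$ forces $K_i=H^\perp$ for each $i\geq 2$, so the point equals $\Delta^\ell(H)$. Membership in the product then reads as $H\in A_1$ and $H^\perp\in{\perp}(A_i)$ for $i\geq 2$; since $\perp$ is a bijection the latter is equivalent to $H\in A_i$. Hence $H\in A_1\cap\dotsb\cap A_\ell$, and the point lies in $\Delta^\ell(A_1\cap\dotsb\cap A_\ell)$, as required.

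There is no real obstacle here: the statement is genuinely routine once Lemma~\ref{L:diagonal} is in hand, and one could alternatively obtain it by induction on $\ell$, peeling off the last dual factor and applying the two-factor case. The only point requiring mild care is the asymmetry of the product (primal in the first slot, dual in the remaining $\ell-1$ slots), which dictates that the bijectivity of $\perp$ is invoked only for $i\geq 2$.
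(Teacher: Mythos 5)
Your proof is correct; the paper offers no proof at all for this lemma, dismissing it as the classical reduction to the diagonal extended to multiple factors, and your element-chase (using that $\Delta^\ell(\Gr(k,V))$ consists exactly of the tuples $(H,H^\perp,\dotsc,H^\perp)$ and that $\perp$ is a bijection) is precisely the routine verification the authors intend the reader to supply.
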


Lemma ~\ref{L:bilinear} extends to the dual diagonal of many factors.

\begin{lemma}\label{L:ellBilinear}
 Let $A_1,\dotsc,A_{\ell}\subset\Gr(k,V)$, and suppose that $M$ is a set of 
 $k\times n$ matrices parametrizing $A_1$ and that $N_i$ is a $n\times (n{-}k)$ matrix
 parametrizing ${\perp}(A_i)$ for $i=2,\dotsc,\ell$.
 Then $\Delta^\ell(A_1\cap \cdots \cap A_\ell)$ is the subset of 
 $A_1\times {\perp}(A_2) \times \cdots \times {\perp}(A_\ell)$ defined in the 
 parametrization $M\times N_2 \times \cdots \times N_\ell$ by the equations
 $MN_i=0_{k\times(n{-}k)}$ for $i=2,\dotsc,\ell$. 
\end{lemma}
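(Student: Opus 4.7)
The plan is to bootstrap directly from Lemma~\ref{L:bilinear}, treating the multi-factor dual diagonal as $\ell-1$ independent copies of the two-factor case.

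First, I would invoke Lemma~\ref{L:ellDiagonal} to rewrite
\[
\Delta^\ell(A_1\cap\cdots\cap A_\ell)
\;=\; \bigl(A_1\times{\perp}(A_2)\times\cdots\times{\perp}(A_\ell)\bigr)\cap \Delta^\ell(\Gr(k,V)).
\]
This reduces the task to identifying, inside the parametrizing product $M\times N_2\times\cdots\times N_\ell$, the preimage of $\Delta^\ell(\Gr(k,V))$.

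Next, I would unfold the definition of $\Delta^\ell$: a tuple $(H,K_2,\dotsc,K_\ell)$ in $A_1\times{\perp}(A_2)\times\cdots\times{\perp}(A_\ell)$ lies in $\Delta^\ell(\Gr(k,V))$ exactly when $K_i=H^\perp$ for every $i=2,\dotsc,\ell$. These $\ell-1$ conditions live on disjoint factors $N_i$ and so may be imposed independently. For each fixed $i$, the bilinear criterion $H^\perp=K_i \iff MN_i=0_{k\times(n-k)}$, recorded just before Lemma~\ref{L:bilinear} and exploited in its proof, translates the condition on that factor into the $k(n{-}k)$ bilinear equations $MN_i=0$. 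Taking the conjunction over $i=2,\dotsc,\ell$ and combining with the first step produces the claimed description.

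I do not foresee a substantive obstacle: the argument is essentially bookkeeping built on top of Lemma~\ref{L:bilinear}. The only points worth flagging are that the $\ell-1$ bilinear systems decouple cleanly because each $N_i$ occupies its own independent block of coordinates, and that one uses the hypothesis that $M$ and each $N_i$ are genuine \emph{parametrizations} (surjective onto $A_1$ and ${\perp}(A_i)$, respectively) to guarantee that every tuple in the intersection on the right-hand side of Lemma~\ref{L:ellDiagonal} is actually realized by some choice of matrices $(M,N_2,\dotsc,N_\ell)$.
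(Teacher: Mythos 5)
Your argument is correct and matches the paper's intent exactly: the paper gives no explicit proof of this lemma, stating only that Lemma~\ref{L:bilinear} ``extends to the dual diagonal of many factors,'' and your combination of Lemma~\ref{L:ellDiagonal} with the factorwise bilinear criterion $H^\perp=K_i \iff MN_i=0$ is precisely that extension. The point you flag about surjectivity of the parametrizations is a reasonable bit of care, and nothing further is needed.
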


\begin{theorem}\label{thm:main}
 Any sufficiently general instance of a Schubert problem $\bbeta$ may be 
 reformulated as a complete intersection of bilinear equations in the coordinates
 $(M_{\beta^1},M_{\beta^{2{\perp}}},\dotsc,M_{\beta^{\ell{\perp}}})$.
\end{theorem}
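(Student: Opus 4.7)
The plan is to combine the primal-dual bilinear formulation of Lemma \ref{L:ellBilinear} with the explicit parametrizations of Lemma \ref{oneCondition} applied in both the primal and dual Grassmannians, and then verify ``complete intersection'' by matching the number of bilinear equations to the number of free parameters.

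Fix a general instance and write $A_i := X_{\beta^i}\Fdot^i$ for $i=1,\dotsc,\ell$. By Lemma \ref{oneCondition}, the family $M_{\beta^1}\cdot\Fdot^1$ of $k\times n$ matrices has row spaces parametrizing a dense open subset of $A_1$; call this parametrization $M$. For $i\geq 2$, Lemma \ref{dualSchub} identifies ${\perp}(A_i)$ with $X_{\beta^{i\perp}}\Fdot^{i\perp}\subset \Gr(n-k,V^*)$, and Lemma \ref{oneCondition} applied in the dual Grassmannian produces a family $N_i$ of $n\times(n-k)$ matrices (obtained from $M_{\beta^{i\perp}}\cdot\Fdot^{i\perp}$ by transposition) whose column spaces parametrize a dense open subset of ${\perp}(A_i)$. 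Feeding $M, N_2,\dotsc,N_\ell$ into Lemma \ref{L:ellBilinear} shows that $\Delta^\ell(A_1\cap\dotsb\cap A_\ell)$ is cut out on $M\times N_2\times\dotsb\times N_\ell$ by the $(\ell-1)$ blocks of bilinear equations $M N_i = 0_{k\times(n-k)}$ for $i=2,\dotsc,\ell$.

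To see this is a complete intersection, I would match dimensions. Because the isomorphism $\perp$ preserves codimension, $|\beta^{i\perp}|=|\beta^i|$, and so $\dim M_{\beta^{i\perp}} = k(n-k) - |\beta^i|$. The total number of free parameters is therefore
\[
  \bigl(k(n{-}k)-|\beta^1|\bigr) + \sum_{i=2}^{\ell}\bigl(k(n{-}k)-|\beta^i|\bigr)
  \ =\ \ell\cdot k(n{-}k)-\sum_{i=1}^{\ell}|\beta^i|
  \ =\ (\ell{-}1)\,k(n{-}k),
\]
using the defining Schubert-problem relation $\sum|\beta^i|=k(n-k)$. This matches the $(\ell-1)k(n-k)$ bilinear equations exactly. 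Kleiman's transversality theorem \cite{KL74}, applied to the original Schubert problem for a general choice of flags, guarantees that $A_1\cap\dotsb\cap A_\ell$ is a reduced $0$-dimensional scheme of $N(\bbeta)$ points, whence the bilinear system is $0$-dimensional with the expected degree in the product chart and is therefore a complete intersection in an affine space of the right dimension.

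The one step that demands care---and where the phrase ``sufficiently general'' earns its keep---is that the charts of Lemmas \ref{oneCondition} and \ref{dualSchub} each cover only a \emph{dense open} subset of the corresponding Schubert variety, so \emph{a priori} some solution could escape $M\times N_2\times\dotsb\times N_\ell$ and be invisible to the bilinear system. I expect this to be the main (mild) obstacle, and would handle it by an incidence-variety argument: the complement of the $M_\beta$-chart inside $X_\beta\Fdot$ is a proper closed subvariety, and the locus of flag tuples $(\Fdot^1,\dotsc,\Fdot^\ell)$ for which some point of $A_1\cap\dotsb\cap A_\ell$ falls into one of these complements is closed and proper in the irreducible space of flag tuples. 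Intersecting its complement with the transversality open set from Kleiman carves out the dense open locus of sufficiently general instances for which every solution lies in the parametrized chart, and on that locus the reformulation is the required complete intersection of bilinear equations.
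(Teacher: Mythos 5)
Your proposal is correct and follows essentially the same route as the paper: the paper's proof also feeds the $M_{\beta^1}$ chart for $A_1$ and the dual charts $M_{\beta^{i\perp}}$ for ${\perp}(A_i)$, $i\geq 2$, into Lemma~\ref{L:ellBilinear} and observes that this yields $k(n{-}k)(\ell{-}1)$ bilinear equations in $k(n{-}k)(\ell{-}1)$ variables for a zero-dimensional general instance. Your closing paragraph, ensuring via an incidence-variety argument that no solution escapes the dense open charts, makes explicit a genericity point the paper leaves implicit in the phrase ``sufficiently general,'' and is a welcome addition rather than a deviation.
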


\begin{proof}
 A sufficiently general instance of $\bbeta$ is zero-dimensional with $N(\bbeta)$
 solutions. 
 The result follows from Lemma~\ref{L:ellBilinear} in which $A_1$ is the open subset of
 $X_{\beta^1}\Fdot^1$ parametrized by $M_{\beta^1}$ and for $i>1$, $A_i$ is the open
 subset of $X_{\beta^i}\Fdot^i$ with ${\perp}A_i$ parametrized by
 $M_{\beta^{\ell{\perp}}}$.
 This gives $k(n{-}k)(\ell{-}1)$ bilinear equations in $k(n{-}k)(\ell{-}1)$ variables.
\end{proof}

Theorem~\ref{thm:main} provides a formulation of an instance of a Schubert problem 
as a square system, to which the certification afforded by Smale's $\alpha$-theory may be
applied. 
This rectifies the fundamental obstruction to using numerical methods 
in place of certified symbolic methods for solving Schubert problems.

We apply Lemma~\ref{L:bilinear} to the coordinates of Lemma~\ref{twoConditions}.

\begin{example}

  Given indices $\beta^1,\dotsc,\beta^4\in\binom{[n]}{k}$ and flags
  $\Fdot^1,\dotsc,\Fdot^4$ in $V$.
  If $M_{\beta^1}^{\beta^2}$
  parametrizes an open subset of $X_{\beta^1}\Fdot^1\cap X_{\beta^2}\Fdot^2$ and 
  $N_{\beta^3}^{\beta^4}$ parametrizes an open subset of 
  ${\perp}(X_{\beta^3}\Fdot^3\cap X_{\beta^4}\Fdot^4)$ as in Lemma~\ref{twoConditions}.
  Then the bilinear equations $M_{\beta^1}^{\beta^2} N_{\beta^3}^{\beta^4}=0$ define
  the intersection
\[
    \Delta\bigl(X_{\beta^1}\Fdot^1\cap X_{\beta^2}\Fdot^2
     \cap X_{\beta^3}\Fdot^3\cap X_{\beta^4}\Fdot^4\bigr)
\]
  as a subset of $(X_{\beta^1}\Fdot^1\cap X_{\beta^2}\Fdot^2) \times {\perp}(X_{\beta^3}\Fdot^3\cap X_{\beta^4}\Fdot^4)$. 
\end{example}


This suggests an improvement of the efficiency of the primal-dual formulation of Schubert
problems. 

\begin{corollary}\label{C:primal-dual}
 Any sufficiently general instance of a Schubert problem $\bbeta$ given by the
 intersection of $\ell$ Schubert varieties may be naturally reformulated as a
 complete intersection in $\lfloor \frac{\ell-1}{2} \rfloor k(n{-}k)$ variables.
\end{corollary}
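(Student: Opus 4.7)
The plan is to group the $\ell$ Schubert conditions into pairs (plus one singleton when $\ell$ is odd), replace the single-condition coordinates $M_{\beta^i}$ by the more economical two-condition coordinates $M_{\beta^{2i-1}}^{\beta^{2i}}$ of Lemma~\ref{twoConditions}, and then apply Lemma~\ref{L:ellBilinear} with one ``coarsened'' group per factor---exactly as illustrated in the example preceding this corollary.

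Concretely, I first partition $\{1,\dotsc,\ell\}$ into $m:=\lceil\ell/2\rceil$ groups: $\lfloor\ell/2\rfloor$ pairs together with one singleton when $\ell$ is odd. Since the instance is sufficiently general, the flags in each pair are in linear general position, so Lemma~\ref{twoConditions} parametrizes a dense open subset of each pair intersection $X_{\beta^{2i-1}}\Fdot^{2i-1}\cap X_{\beta^{2i}}\Fdot^{2i}$ by a translate of $M_{\beta^{2i-1}}^{\beta^{2i}}$, using $k(n{-}k)-|\beta^{2i-1}|-|\beta^{2i}|$ parameters; a singleton group is parametrized by a translate of $M_{\beta^j}$, using $k(n{-}k)-|\beta^j|$ parameters. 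I then designate one group to be the primal factor with $k\times n$ parametrization $M$, and apply the duality $\perp$ of Lemma~\ref{dualSchub} to each of the remaining $m-1$ groups, producing intersections of dual Schubert varieties in $\Gr(n{-}k,V^*)$ parametrized by $n\times(n{-}k)$ column matrices $N_2,\dotsc,N_m$ (via the evident analog of Lemma~\ref{twoConditions} on the dual Grassmannian).

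Lemma~\ref{L:ellDiagonal} and Lemma~\ref{L:ellBilinear}, applied with $A_j$ the intersection associated to the $j$th group, then identify the image of $X_{\beta^1}\Fdot^1\cap\dotsb\cap X_{\beta^\ell}\Fdot^\ell$ under the dual diagonal $\Delta^m$ as the subset of $M\times N_2\times\dotsb\times N_m$ cut out by the bilinear equations $MN_j=0_{k\times(n-k)}$ for $j=2,\dotsc,m$. Using the defining identity $\sum_{i=1}^\ell|\beta^i|=k(n{-}k)$ of a Schubert problem, the total number of parameters is
\[
  m\,k(n{-}k)\;-\;\sum_{i=1}^\ell|\beta^i|\;=\;(m-1)\,k(n{-}k)\;=\;\lfloor(\ell-1)/2\rfloor\,k(n{-}k),
\]
matching the number $(m-1)\,k(n{-}k)$ of equations, so the system is square.

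The main obstacle will be to justify that the sufficiently general hypothesis simultaneously guarantees (a) linear general position of the flags within each pair, so that Lemma~\ref{twoConditions} applies; (b) inclusion of every one of the $N(\bbeta)$ isolated solutions in the chosen dense open parametrizations; and (c) that the resulting square bilinear system has no positive-dimensional extraneous components. Each of (a) and (b) fails only on a proper subvariety of the space of flag tuples, and (c) follows from Kleiman's transversality theorem applied to the full Schubert intersection, so a generic choice of flags satisfies all three conditions.
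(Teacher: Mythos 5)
Your proposal is correct and follows essentially the same route as the paper: pair the conditions, parametrize the first pair with $M_{\beta^1}^{\beta^2}$ and the remaining pairs (plus a possible singleton) by dual two-condition coordinates $N_{\beta^{2i-1}}^{\beta^{2i}}$, and impose the bilinear equations of Lemma~\ref{L:ellBilinear}. Your explicit variable count and the discussion of the genericity hypotheses are welcome additions that the paper leaves implicit.
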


\begin{proof}
 When $\ell$ is even one may reduce the number of equations and variables by
 parametrizing
 \[
   (X_{\beta^1}\Fdot^1 \cap X_{\beta^2} \Fdot^2) \times (X_{\beta^{3{\perp}}}
 \Fdot^{3{\perp}}\cap X_{\beta^{4{\perp}}}\Fdot^{4{\perp}}) \times\cdots \times 
 (X_{\beta^{\ell-1{\perp}}}\Fdot^{\ell-1{\perp}}\cap X_{\beta^{\ell{\perp}}}
 \Fdot^{\ell{\perp}})\,,
\]
 using local coordinates $(M_{\beta^1}^{\beta^2}, N_{\beta^3}^{\beta^4},\dotsc,
 N_{\beta^{\ell-1}}^{\beta^\ell})$.
 When $\ell$ is odd, the last factor is simply $X_{\beta_{\ell{\perp}}}\Fdot^{\ell{\perp}}$, and
 the local coordinates are $(M_{\beta^1}^{\beta^2}, N_{\beta^3}^{\beta^4},
 \dotsc,N_{\beta^{\ell-2}}^{\beta^\ell-1},N_{\beta^\ell})$.
\end{proof}

\section{Specialization and Generalization.}\label{future}

In the previous section we formulated a Schubert problem as a square system,
which enables the certification of output from numerical methods, but at the expense of
increasing the number of variables.
In many cases, it is possible to eliminate some variables without the system
becoming overdetermined.

Recall that $X_{\sI} \Fdot$ is a hypersurface
defined by one equation.
Given a Schubert problem $\bbeta=(\I,\dotsc,\I,\beta^m,\dotsc,\beta^\ell)$, we obtain a
square system using the primal formulation for the intersection of the first
$m{+}1$ Schubert varieties in local coordinates $M_{\beta^m}^{\beta^{m+1}}$.
While this generally introduces equations of higher degree, 
it reduces the number of variables.

\begin{example}
 We denote $\beta=(n{-}k{-}2\,,n{-}k{+}2\,,n{-}k{+}3\,,\dotsc,n)$ by $\beta=\III$.
 Consider the Schubert problem
 \[\bbeta = (\,\I\,,\I\,,\I\,,\I\,,\I\,,\I\,,\III\,,\III\,,\III\,,\III\,)\]
 in $\Gr(3,9)$.
 The primal formulation~\eqref{Eq:Rank} consists of 26 linearly independent determinants
 of degree at most 3 in $M_{\sIII}^{\sIII}$, which has dimension 12.
 Using the primal formulation for the intersection
 \[
   X_{\sI}\Fdot^1\cap\cdots\cap X_{\sI}\Fdot^6 \cap X_{\sIII}\Fdot^7 \cap X_{\sIII}\Fdot^8\,,
 \]
 and the dual formulation for the intersection
 \[
   X_{{\sIII}^{\perp}}\Fdot^{9{\perp}} \cap X_{{\sIII}^{\perp}}\Fdot^{10{\perp}}\,,
 \]
 this problem is reduced to a square system consisting of 18 bilinear equations 
 and 6 determinants in the 24 variables $(M_{\sIII}^{\sIII},M_{\sIII}^{\sIII})$.
 The full primal-dual formulation of Corolary~\ref{C:primal-dual} has 72 bilinear
 equations in 72 variables.
 The number of solutions is $N(\bbeta)=437$.
\end{example}

This primal-dual formulation and its improvements of Corolary~\ref{C:primal-dual} and that
given above may be used either to solve an instance of a Schubert problem on a
Grassmannian or to to certify solutions to an instance of a Schubert problem computed by
one of the other methods mentioned in the Introduction. 

This primal-dual formulation extends with little change (Lemma~\ref{twoConditions} and its
consequences do not always apply) to Schubert problems on all classical flag
varieties---those of types $A$, $B$, $C$, and $D$.  
The fundamental reason is that Schubert varieties on classical flag varieties all have
parametrizations in terms of local coordinates as in Lemma~\ref{oneCondition}, and
dual diagonal maps generalizing $\Delta$ and $\Delta^\ell$.

We have implemented these techniques in Schubert problems on Grassmannians, and
will implement this for other flag varieties.

\providecommand{\bysame}{\leavevmode\hbox to3em{\hrulefill}\thinspace}
\providecommand{\MR}{\relax\ifhmode\unskip\space\fi MR }
\providecommand{\MRhref}[2]{%
  \href{http://www.ams.org/mathscinet-getitem?mr=#1}{#2}
}
\providecommand{\href}[2]{#2}

\end{document}